%---------------------------------------------------------------------------------------
%
%   Manuscript: Choi-Davis-Jensen's inequality without convexity
%
%                                                                      %
%   Authors: Jadranka Micic and Hamid Reza Moradi                       %
%                                                                      %
%                                                                      %
%   12.5.2017.- ver. 1 - Hamid Reza Moradi                             %
%   15.5.2017.- ver. 2 - Hamid Reza Moradi                             %
%   18.52017. - ver.3 - Jadranka                             %
%----------------------------------------------------------------------------------------
%

\documentclass[12pt,twoside]{amsart}
%%%%%%%%%%%%%%%%%%%%%%%%%%%%%%%%%%%%%%%%%%%%%%%%%%%%%%%%%%%%%%%%%%%%%%%%%%%%%%%%%%%%%%%%%%%%%%%%%%%%%%%%%%%%%%%%%%%%%%%%%%%%%%%%%%%%%%%%%%%%%%%%%%%%%%%%%%%%%%%%%%%%%%%%%%%%%%%%%%%%%%%%%%%%%%%%%%%%%%%%%%%%%%%%%%%%%%%%%%%%%%%%%%%%%%%%%%%%%%%%%%%%%%%%%%%%
\usepackage{amsmath, amsthm, amscd, amsfonts, amssymb, graphicx, color}
\usepackage{enumerate}
\usepackage[colorlinks=true,
linkcolor=magenta,
urlcolor=cyan,
citecolor=cyan]{hyperref}

\setcounter{MaxMatrixCols}{10}
%TCIDATA{OutputFilter=Latex.dll}
%TCIDATA{Version=5.50.0.2953}
%TCIDATA{<META NAME="SaveForMode" CONTENT="1">}
%TCIDATA{BibliographyScheme=Manual}
%TCIDATA{LastRevised=Sunday, March 12, 2017 19:16:36}
%TCIDATA{<META NAME="GraphicsSave" CONTENT="32">}
%TCIDATA{Language=American English}

\addtolength{\topmargin}{-1.5cm}
\linespread {1.3}
\textwidth 17cm
\textheight 23cm
\addtolength{\hoffset}{-0.3cm}
\oddsidemargin 0cm
\evensidemargin 0cm
\setcounter{page}{1}
\newtheorem{theorem}{Theorem}[section]
\newtheorem{lemma}{Lemma}[section]
\newtheorem{remark}{Remark}[section]

\newtheorem{corollary}{Corollary}[section]
\newtheorem{example}{Example}[section]
\newtheorem{proposition}{Proposition}[section]
\numberwithin{equation}{section}

\begin{document}
\title{Choi-Davis-Jensen's inequality without convexity}

\author{Jadranka Mi\'{c}i\'{c}}
\address{Jadranka Mi\'{c}i\'{c}, Faculty of Mechanical Engineering and Naval Architecture, University of Zagreb,
Ivana Lu\v ci\' ca 5, 10000 Zagreb, Croatia.}\email{jmicic@fsb.hr}

\author{Hamid Reza Moradi}
\address{Hamid Reza Moradi,  Young Researchers and Elite Club, Mashhad Branch, Islamic Azad University, Mashhad, Iran.}\email{hrmoradi@mshdiau.ac.ir}

\author{Shigeru Furuichi}
\address{Shigeru Furuichi,  Department of Information Science, College of Humanities and Sciences, Nihon University, 3-25-40, Sakurajyousui, Setagaya-ku, Tokyo, 156-8550, Japan.}\email{furuichi@chs.nihon-u.ac.jp}

\subjclass[2010]{Primary  47A63, Secondary 47A64, 26D15, 94A17, 15A39.}
\keywords{Choi-Davis-Jensen's inequality, positive linear maps, convex function, operator inequality, Kantorovich inequality.} \maketitle
\begin{abstract}
We give the Choi-Davis-Jensen type inequality without using convexity.
Applying our main results, we also give new inequalities improving previous known results. In particular, we show some inequalities for relative operator entropies and quantum mechanical entropies.
\end{abstract}

\maketitle

%------------------------------------------------------------------------------------%
\pagestyle{myheadings}
\markboth{\centerline {Choi-Davis-Jensen's inequality without convexity}}
{\centerline {J. Mi\'{c}i\'{c}, H.R. Moradi \& S. Furuichi}} \bigskip \bigskip
%------------------------------------------------------------------------------------%
%------------------------------------------------------------------------------------%

%%%%%%%%%%%%%%%%%%%%%%%%%%%%%%%%%%%%%%%%%%%%%%%%%%%%%%%%%%%%%%%% 1 %%%%%%%%%%%%%%%%%%%%%%%%%%%%%%%%%%%%%%%%%%%%%%%
\section{\bf Introduction} \label{section1}
\vskip0.4 true cm
We assume that the reader is familiar with basic notions about operator theory.

Davis \cite{5} and Choi \cite{6} showed that if $\Phi :\mathcal{B}(\mathcal{H}) \rightarrow \mathcal{B}(\mathcal{K})$ is a unital positive linear map  and if $f$ is an operator convex function on an interval $I$, then the so-called {\it Choi-Davis-Jensen's inequality} (in short C-D-J inequality)
\begin{equation}\label{9}
f\left( \Phi \left( A \right) \right)\le \Phi \left( f\left( A \right) \right)
\end{equation}
holds for every self-adjoint operator $A$ on $\mathcal{H}$ whose spectrum is contained in $I$.

The inequality \eqref{9} can break down when the operator convexity is dropped. For instance, taking
\begin{equation*}
A=\left( \begin{matrix}
   4 & 1 & -1  \\
   1 & 2 & 1  \\
   -1 & 1 & 2  \\
\end{matrix} \right), \quad \Phi \left({{\left( {{a}_{ij}} \right)}_{1\le i,j\le 3}} \right)={{\left( {{a}_{ij}} \right)}_{1\le i,j\le 2}} \quad \mathrm{and}  \quad f\left( t \right)\equiv {{t}^{4}}.
\end{equation*}
By a simple computation, we have

\[\left( \begin{matrix}
   325 & 132  \\
   132 & 61  \\
\end{matrix} \right)={{\Phi }^{4}}\left( A \right)\nless \Phi \left( {{A}^{4}} \right)=\left( \begin{matrix}
   374 & 105  \\
   105 & 70  \\
\end{matrix} \right).\]
This example shows that the inequality \eqref{9} will be false if we replace the operator convex function by a general convex function. In \cite[Theorem 1]{1}, Mi\'ci\'c et al. pointed out that the inequality \eqref{9} holds true for real valued continuous convex functions with conditions on the bounds of the operators.

\medskip

The purpose of this paper is to obtain the C-D-J inequality for non-convex functions. Applying our main results, we give new inequalities improving previous known results such as the Kantorovich inequality, and bounds for relative operator entropies and quantum mechanical entropies.

%%%%%%%%%%%%%%%%%%%%%%%%%%%%%%%%%%%%%%%%%%%%%%%%%%%%%%%%%%%%%%%% 2 %%%%%%%%%%%%%%%%%%%%%%%%%%%%%%%%%%%%%%%%%%%%%%%
\section{\bf Main results} \label{section2}
\vskip0.4 true cm
In this section we give our main results.
\begin{theorem}\label{th1}
 Let $f:I\to \mathbb{R}$ be continuous twice differentiable function such that $\alpha \le f''\le \beta $ where $\alpha ,\beta \in \mathbb{R}$ and let $\Phi :\mathcal{B}(\mathcal{H}) \rightarrow \mathcal{B}(\mathcal{K})$ be unital positive linear map. Then
\begin{equation}\label{th1-1}
f\left( \Phi \left( A \right) \right)\le \Phi \left( f\left( A \right) \right)+\frac{\beta -\alpha }{2}\left\{ \left( M+m \right)\Phi \left( A \right)-Mm \right\}+\frac{1}{2}\left( \alpha \Phi {{\left( A \right)}^{2}}-\beta \Phi \left( {{A}^{2}} \right) \right),
\end{equation}
and
\begin{equation}\label{th1-2}
\Phi \left( f\left( A \right) \right)\le f\left( \Phi \left( A \right) \right)+\frac{\beta -\alpha }{2}\left\{ \left( M+m \right)\Phi \left( A \right)-Mm \right\}+\frac{1}{2}\left( \alpha \Phi \left( {{A}^{2}} \right)-\beta \Phi {{\left( A \right)}^{2}} \right),
\end{equation}
for any self-adjoint operator $A$ on $\mathcal{H}$ with the spectrum $Sp\left( A \right)\subseteq \left[ m,M \right]\subset I$.
\end{theorem}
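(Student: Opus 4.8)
The plan is to split $f$ into two pieces on which convexity is available, and then to use the elementary secant (chord) inequality for a convex function twice --- once with the operator argument $A$, once with the operator argument $\Phi(A)$ --- so that the unwanted terms cancel upon subtraction.

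Since $\alpha\le f''\le\beta$ on $[m,M]$, the functions $g(t):=f(t)-\tfrac{\alpha}{2}t^{2}$ and $h(t):=\tfrac{\beta}{2}t^{2}-f(t)$ have $g''=f''-\alpha\ge 0$ and $h''=\beta-f''\ge 0$, so both are continuous convex functions on $[m,M]$. For any continuous convex $\varphi$ on $[m,M]$ one has the scalar secant bound $\varphi(t)\le\frac{(M-t)\varphi(m)+(t-m)\varphi(M)}{M-m}$ for $t\in[m,M]$, which, via the functional calculus, becomes an operator inequality for every self-adjoint operator whose spectrum lies in $[m,M]$. I will use this twice. First, apply it to $\varphi=h$ with the operator argument $A$ and then apply $\Phi$ (positive, linear and unital, with $m\le\Phi(A)\le M$); substituting $h(m),h(M)$, using $\Phi(h(A))=\tfrac{\beta}{2}\Phi(A^{2})-\Phi(f(A))$ and the identity $\frac{(M-X)m^{2}+(X-m)M^{2}}{M-m}=(M+m)X-Mm$, one reaches a lower bound $\Phi(f(A))\ge\tfrac{\beta}{2}\Phi(A^{2})-\tfrac{\beta}{2}\{(M+m)\Phi(A)-Mm\}+C$, where $C:=\frac{(M-\Phi(A))f(m)+(\Phi(A)-m)f(M)}{M-m}$. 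Second, apply the secant bound to $\varphi=g$ with the operator argument $\Phi(A)$; rearranging yields the matching upper bound $f(\Phi(A))\le\tfrac{\alpha}{2}\Phi(A)^{2}-\tfrac{\alpha}{2}\{(M+m)\Phi(A)-Mm\}+C$ with the very same $C$.

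Subtracting the first inequality from the second cancels $C$ entirely, and collecting the surviving terms gives precisely \eqref{th1-1}. For \eqref{th1-2} I run the same argument with the roles of the two convex pieces swapped: apply the secant bound to $g$ with argument $A$ and then $\Phi$ (an upper bound for $\Phi(f(A))$), and to $h$ with argument $\Phi(A)$ (a lower bound for $f(\Phi(A))$), and subtract. The two computations are mirror images, so the same cancellation occurs and \eqref{th1-2} drops out.

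There is no real analytic obstacle here; the work is bookkeeping. What has to be checked is that the passage from the scalar secant inequality to its operator version is legitimate (continuity of $g$ and $h$ and the spectral inclusion), that the positive unital map $\Phi$ preserves the operator order and acts correctly on the affine expression $(M\cdot 1-A)\varphi(m)+(A-m\cdot 1)\varphi(M)$, and that $Sp(\Phi(A))\subseteq[m,M]$ so that the secant bound may legitimately be applied with $\Phi(A)$ as argument. The only genuinely creative step is recognising the right splitting $f=g+\tfrac{\alpha}{2}t^{2}=\tfrac{\beta}{2}t^{2}-h$ together with the right pairing of arguments; after that everything is forced.
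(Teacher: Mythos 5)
Your argument is correct and is essentially the paper's own proof: the same splitting $g=f-\tfrac{\alpha}{2}t^{2}$, $h=\tfrac{\beta}{2}t^{2}-f$, the same secant bound $\varphi(t)\le L_\varphi(t)$ applied once to the operator $A$ (followed by $\Phi$) and once to $\Phi(A)$, and the same cancellation of the common chord term $C=L(\Phi(A))$ upon combining the two estimates (this is exactly how the paper combines \eqref{lemma1-eq2} with \eqref{lemma1-eq3}, and \eqref{lemma1-eq1} with \eqref{lemma1-eq4}). The only cosmetic difference is that the paper derives the scalar secant inequality by writing $t$ as a convex combination of $m$ and $M$ and invoking Jensen's inequality, which is the same bound you quote directly.
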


\medskip

In order to prove Theorem~\ref{th1}, we need the following lemma.

\begin{lemma}\label{lemma1}
 Let $f:I\to \mathbb{R}$ be continuous twice differentiable function such that $\alpha \le f''\le \beta $ on $I$, where $\alpha ,\beta \in \mathbb{R}$, and let $\Phi :\mathcal{B}(\mathcal{H}) \rightarrow \mathcal{B}(\mathcal{K})$ be unital  positive linear map. If $A$ is a self-adjoint operator on $\mathcal{H}$  with $Sp\left( A \right)\subseteq \left[ m,M \right]\subset I$ for some $m<M$, then
 \begin{equation}\label{lemma1-eq1}
\Phi \left( f\left( A \right) \right)\le L\left( \Phi \left( A \right) \right)-\frac{\alpha }{2}\left\{ \left( M+m \right)\Phi \left( A \right)-Mm-\Phi \left( {{A}^{2}} \right) \right\},
\end{equation}
\begin{equation}\label{lemma1-eq2}
\Phi \left( f\left( A \right) \right)\ge L\left( \Phi \left( A \right) \right)-\frac{\beta }{2}\left\{ \left( M+m \right)\Phi \left( A \right)-Mm-\Phi \left( {{A}^{2}} \right) \right\},
\end{equation}
\begin{equation}\label{lemma1-eq3}
f\left( \Phi \left( A \right) \right)\le L\left( \Phi \left( A \right) \right)-\frac{\alpha }{2}\left\{ \left( M+m \right)\Phi \left( A \right)-Mm-\Phi {{\left( A \right)}^{2}} \right\},
\end{equation}
 \begin{equation}\label{lemma1-eq4}
f\left( \Phi \left( A \right) \right)\ge L\left( \Phi \left( A \right) \right)-\frac{\beta }{2}\left\{ \left( M+m \right)\Phi \left( A \right)-Mm-\Phi {{\left( A \right)}^{2}} \right\},
 \end{equation}
 where
 \begin{equation}\label{lineL}
L\left( t \right):=\frac{M-t}{M-m}f\left( m \right)+\frac{t-m}{M-m}f\left( M \right),
 \end{equation}
 is the line that passes through the points $\left( m,f\left( m \right) \right)$ and $\left( M,f\left( M \right) \right)$.
\end{lemma}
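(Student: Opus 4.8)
The plan is to reduce all four inequalities to the elementary scalar fact that a convex (resp. concave) function on $[m,M]$ lies below (resp. above) the chord through its endpoints, after first subtracting a suitable quadratic to absorb the curvature bound.

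First I would set $g(t):=f(t)-\frac{\alpha}{2}t^{2}$. Then $g''=f''-\alpha\ge 0$ on $I$, so $g$ is convex and therefore lies below the chord joining $(m,g(m))$ and $(M,g(M))$, i.e.
\[
g(t)\le \frac{M-t}{M-m}g(m)+\frac{t-m}{M-m}g(M)=:L_g(t),\qquad t\in[m,M].
\]
Using $g(m)=f(m)-\frac{\alpha}{2}m^{2}$, $g(M)=f(M)-\frac{\alpha}{2}M^{2}$ together with the identity $\frac{M-t}{M-m}m^{2}+\frac{t-m}{M-m}M^{2}=(M+m)t-Mm$, one rewrites this as $L_g(t)=L(t)-\frac{\alpha}{2}\big((M+m)t-Mm\big)$. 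The same computation with $h(t):=f(t)-\frac{\beta}{2}t^{2}$ (which is concave, since $h''=f''-\beta\le 0$) gives $h(t)\ge L(t)-\frac{\beta}{2}\big((M+m)t-Mm\big)$ on $[m,M]$.

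Next I would pass to operators. For \eqref{lemma1-eq1} and \eqref{lemma1-eq2}, since $Sp(A)\subseteq[m,M]$ the functional calculus turns the scalar inequality $g(t)\le L_g(t)$ into $g(A)\le L_g(A)$; because $\Phi$ is positive, linear and unital and $L_g$ is affine, applying $\Phi$ preserves the order and the affine structure, so $\Phi(g(A))\le \Phi(L_g(A))=L_g(\Phi(A))$. Expanding $\Phi(g(A))=\Phi(f(A))-\frac{\alpha}{2}\Phi(A^{2})$ and $L_g(\Phi(A))=L(\Phi(A))-\frac{\alpha}{2}\big((M+m)\Phi(A)-Mm\big)$ and rearranging yields exactly \eqref{lemma1-eq1}; running the same argument with $h$, whose chord inequality is reversed and stays reversed under the positive map $\Phi$, gives \eqref{lemma1-eq2}. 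For \eqref{lemma1-eq3} and \eqref{lemma1-eq4} I would instead apply the scalar chord inequalities directly to the operator $\Phi(A)$: from $m\le A\le M$ and unitality of $\Phi$ we get $m\le \Phi(A)\le M$, hence $Sp(\Phi(A))\subseteq[m,M]$, so $g(\Phi(A))\le L_g(\Phi(A))$ and $h(\Phi(A))\ge L_h(\Phi(A))$; expanding as before but with $\Phi(A)^{2}$ in place of $\Phi(A^{2})$ produces \eqref{lemma1-eq3} and \eqref{lemma1-eq4}.

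The only points requiring care are the bookkeeping identity that recasts $L_g$ as $L$ minus a multiple of $(M+m)t-Mm$, and the observation that a positive unital map sends affine functions of $A$ to the same affine functions of $\Phi(A)$ while reversing the inequality in the concave case; beyond these there is no genuine obstacle, only routine verification. I note in passing that Theorem~\ref{th1} then follows by combining \eqref{lemma1-eq3} with \eqref{lemma1-eq2} and \eqref{lemma1-eq1} with \eqref{lemma1-eq4}, which eliminates the common term $L(\Phi(A))$.
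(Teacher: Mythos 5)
Your proposal is correct and follows essentially the same route as the paper: the paper also subtracts $\frac{\alpha}{2}t^{2}$ (resp.\ $\frac{\beta}{2}t^{2}$) to obtain a convex auxiliary function, applies the endpoint chord/Jensen inequality to get $f(t)\le L(t)-\frac{\alpha}{2}\bigl((M+m)t-Mm-t^{2}\bigr)$ on $[m,M]$, and then uses the functional calculus for $A$ followed by $\Phi$ for \eqref{lemma1-eq1}--\eqref{lemma1-eq2}, and the same scalar inequality applied directly to $\Phi(A)$ for \eqref{lemma1-eq3}--\eqref{lemma1-eq4}. The bookkeeping identity you isolate is exactly the paper's computation of $\lambda(1-\lambda)(a-b)^{2}=(t-m)(M-t)$.
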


\begin{proof}
Since $\alpha \le f''\le \beta $, then the function ${{g}_{\alpha }}\left( x \right):= f\left( x \right)-\frac{\alpha }{2}{{x}^{2}}$ is convex. So,
$$ g_{\alpha}((1-\lambda) a+ \lambda b) \leq (1-\lambda) g_{\alpha} (a)+ \lambda g_{\alpha}(b),$$ holds for any $0\leq \lambda \leq 1$ and $a,b \in I$.
It follows that
$$  f((1-\lambda) a+ \lambda b) \leq (1-\lambda)  f(a)  + \lambda  f(b) - \frac{\alpha }{2}~\lambda (1-\lambda) (a-b)^{2}.$$
Since any $t \in [m,M]$ can be written in the form $t= \frac{M-t}{M-m} m +\frac{t-m}{M-m}M$, and putting $\lambda = \frac{t-m}{M-m}$, $a=m$ and $b=M$ in the above inequality we have
 \begin{equation}\label{lemma1-5}
f\left( t \right)\le L\left( t \right)-\frac{\alpha }{2}\left( (M+m) t -mM -t^2 \right).
\end{equation}
Now, by using  the standard calculus of a self-adjoint operator $A$ to \eqref{lemma1-5} and next applying an unital  positive linear map $\Phi$ we obtain
$$\Phi(f(A)) \leq \Phi(L(A))-\frac{\alpha}{2} \left\{ (M+m) \Phi(A)-Mm-\Phi(A^2)\right\},$$
which gives the desired inequality \eqref{lemma1-eq1}.

By applying \eqref{lemma1-5} on $\Phi(A)$  we obtain \eqref{lemma1-eq3}.
Using the same technique as above for a convex function ${{g}_{\beta }}\left( t \right):= \frac{\beta }{2}{{x}^{2}}-f\left( x \right)$ we obtain
 \begin{equation*}
L\left( t \right)-\frac{\beta }{2}\left( (M+m) t -mM -t^2 \right) \le f\left( t \right),
\end{equation*}
which gives \eqref{lemma1-eq2} and \eqref{lemma1-eq4}.
\end{proof}

\medskip
From Lemma \ref{lemma1} we can derive Theorem \ref{th1}:

\begin{proof}[Proof of Theorem~\ref{th1}] Let $m <M$.  We obtain \eqref{th1-1} after combining \eqref{lemma1-eq3} with \eqref{lemma1-eq2} and we obtain \eqref{th1-2} after combining \eqref{lemma1-eq1} with \eqref{lemma1-eq4}.
\end{proof}

\begin{remark}
The inequality \eqref{th1-2} is a converse of  C-D-J inequality $f(\Phi(A)) \leq \Phi(f(A))$ for a non-convex function. The second term in  \eqref{th1-2} is always non-negative, while the sing of  third term in \eqref{th1-2} is not determined.
\end{remark}

\begin{example}
To illustrate Theorem \ref{th1} works properly, let $\Phi \left( A \right)=\left\langle Ax,x \right\rangle $, where $x=\left( \begin{matrix}
   \frac{1}{\sqrt{3}}  \\
   \frac{1}{\sqrt{3}}  \\
   \frac{1}{\sqrt{3}}  \\
\end{matrix} \right)$,
   $A=\left( \begin{matrix}
      1 & 0 & -1  \\
      0 & 3 & 1  \\
      -1 & 1 & 2  \\
   \end{matrix} \right)$
and $f\left( t \right)={{t}^{3}}$.
Of course we can choose $m=0.25$ and $M=3.8$. So after some calculations we see that
\[\begin{aligned}
   8&=f\left( \Phi \left( A \right) \right) \\
 & \lneqq \Phi \left( f\left( A \right) \right)+\frac{\beta -\alpha }{2}\left\{ \left( M+m \right)\Phi \left( A \right)-Mm \right\}+\frac{1}{2}\left( \alpha \Phi {{\left( A \right)}^{2}}-\beta \Phi \left( {{A}^{2}} \right) \right)\simeq 27.14, \\
\end{aligned}\]
and
\[\begin{aligned}
   24&=\Phi \left( f\left( A \right) \right) \\
 & \lneqq f\left( \Phi \left( A \right) \right)+\frac{\beta -\alpha }{2}\left\{ \left( M+m \right)\Phi \left( A \right)-Mm \right\}+\frac{1}{2}\left( \alpha \Phi \left( {{A}^{2}} \right)-\beta \Phi {{\left( A \right)}^{2}} \right)\simeq 43.54. \\
\end{aligned}\]
\end{example}
\begin{theorem}\label{th2}
Let $A$ be a self-adjoint operator with $Sp\left( A \right)\subseteq \left[ m,M \right]\subset I$ for some $m<M$. If $f:[m,M]\to (0,\infty)$ is a continuous twice differentiable function such that $\alpha \le f''$ on $[m,M]$, where $\alpha \in \mathbb{R}$, and if $\Phi :\mathcal{B}\left( \mathcal{H} \right)\to \mathcal{B}\left( \mathcal{K} \right)$ is unital  positive linear map, then
\begin{equation}\label{th2-1}
\begin{aligned}
  & \frac{1}{K\left( m,M,f \right)}\left\{ \Phi \left( f\left( A \right) \right)+\frac{\alpha }{2}\left[ \left( M+m \right)\Phi \left( A \right)-Mm-\Phi \left( {{A}^{2}} \right) \right] \right\} \\
 & \le f\left( \Phi \left( A \right) \right) \\
 & \le K\left( m,M,f \right)\Phi \left( f\left( A \right) \right)-\frac{\alpha }{2}\left[ \left( M+m \right)\Phi \left( A \right)-Mm-\Phi {{\left( A \right)}^{2}} \right], \\
\end{aligned}
\end{equation}
where
\begin{equation}\label{const-K}
  K(m,M,f)=\max \left\{\frac{1}{f(t)} \left( \frac{M-t}{M-m} f(m) + \frac{t-m}{M-m}f (M) \right) :  t \in[m,M] \right\}.
\end{equation}
\end{theorem}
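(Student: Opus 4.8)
The plan is to combine the two upper estimates of Lemma~\ref{lemma1} that use only the lower bound $\alpha\le f''$ — namely \eqref{lemma1-eq1} and \eqref{lemma1-eq3} — with a scalar comparison between the chord $L$ and $f$ that is exactly what the constant $K(m,M,f)$ encodes. First I would check that $K:=K(m,M,f)$ is well defined and positive: $f$ is continuous and strictly positive on the compact interval $[m,M]$ and $L$ is continuous there, so the maximum in \eqref{const-K} is attained; evaluating the ratio at $t=m$ gives $L(m)/f(m)=1$, hence $K\ge 1>0$ and dividing by $K$ later is legitimate. By the definition of $K$ we have the pointwise inequality $L(t)\le K\,f(t)$ for all $t\in[m,M]$.

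Next I would transport this scalar inequality to operators in two ways. Since $Sp(A)\subseteq[m,M]$, functional calculus gives $L(A)\le K f(A)$; applying the unital positive linear map $\Phi$ and using that $L$ is affine — so that $L(\Phi(A))=\Phi(L(A))$ because $\Phi$ preserves the identity — yields
\[
L(\Phi(A))=\Phi(L(A))\le K\,\Phi(f(A)).
\]
On the other hand, from $mI\le A\le MI$ together with positivity and unitality of $\Phi$ we get $mI\le\Phi(A)\le MI$, i.e. $Sp(\Phi(A))\subseteq[m,M]$, so the same scalar inequality $L(t)\le K f(t)$, now applied through the functional calculus of the operator $\Phi(A)$, gives $L(\Phi(A))\le K\,f(\Phi(A))$.

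For the right-hand inequality in \eqref{th2-1} I would start from \eqref{lemma1-eq3}, that is $f(\Phi(A))\le L(\Phi(A))-\frac{\alpha}{2}\{(M+m)\Phi(A)-Mm-\Phi(A)^2\}$, and insert $L(\Phi(A))\le K\,\Phi(f(A))$ to obtain precisely the claimed upper bound for $f(\Phi(A))$. For the left-hand inequality I would rewrite \eqref{lemma1-eq1} as $\Phi(f(A))+\frac{\alpha}{2}\{(M+m)\Phi(A)-Mm-\Phi(A^2)\}\le L(\Phi(A))$ and then insert $L(\Phi(A))\le K\,f(\Phi(A))$; dividing by $K>0$ gives the stated lower bound.

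I do not expect a real obstacle here: once Lemma~\ref{lemma1} is available, the argument is essentially bookkeeping. The only points needing a little care are (i) the well-definedness and positivity of $K$; (ii) the fact that the chord estimate $L(t)\le K f(t)$ must be used \emph{twice} in different ways — once after applying $\Phi$, producing $K\,\Phi(f(A))$, and once directly on the operator $\Phi(A)$, producing $K\,f(\Phi(A))$; and (iii) observing that only the one-sided hypothesis $\alpha\le f''$ enters, so no analogue of the constant $\beta$ from Theorem~\ref{th1} appears in \eqref{th2-1}.
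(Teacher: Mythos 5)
Your proposal is correct and follows essentially the same route as the paper: both arguments combine \eqref{lemma1-eq3} with $L(\Phi(A))\le K\,\Phi(f(A))$ for the upper bound and \eqref{lemma1-eq1} with $L(\Phi(A))\le K\,f(\Phi(A))$ (valid since $Sp(\Phi(A))\subseteq[m,M]$) for the lower bound, then divide by $K>0$. The only difference is that you prove the chord estimate $L(t)\le K f(t)$ and its two operator incarnations directly, whereas the paper cites an earlier reference for that step.
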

\begin{proof}
By using \eqref{lemma1-eq3}, we have (see \cite[Corollary~4.12]{mpst2000})
\[\begin{aligned}
   f\left( \Phi \left( A \right) \right)&\le L\left( \Phi \left( A \right) \right)-\frac{\alpha }{2}\left[ \left( M+m \right)\Phi \left( A \right)-Mm-\Phi {{\left( A \right)}^{2}} \right] \\
 & \le K\left( m,M,f \right)\Phi \left( f\left( A \right) \right)-\frac{\alpha }{2}\left[ \left( M+m \right)\Phi \left( A \right)-Mm-\Phi {{\left( A \right)}^{2}} \right], \\
\end{aligned}\]
which gives RHS inequality of \eqref{th2-1}.
Also, by using \eqref{lemma1-eq1} and given that $0<m \leq \Phi(A)<M$, we obtain
$$
 \Phi(f(A))
   \leq K(m,M,f) f(\Phi(A)) -\frac{\alpha}{2} \left[ (M+m) \Phi(A)-Mm-\Phi(A^2)\right].
$$
 Since $K(m,M,f)> 0$ it follows
\begin{equation*}\label{th2-3}
f(\Phi(A)) \geq \frac{1}{K(m,M,f)} \left\{\Phi(f(A))+\frac{\alpha}{2} \left[ (M+m) \Phi(A)-Mm-\Phi(A^2)\right] \right\},
\end{equation*}
which is LHS inequality of \eqref{th2-1}.
\end{proof}

\begin{remark}
Let $A$ and $\Phi$ be as in Theorem~\ref{th2}. If $f:[m,M]\to (0,\infty)$ is a continuous twice differentiable function such that $f'' \le \beta$, where $\beta \in \mathbb{R}$, then by using \eqref{lemma1-eq2} and \eqref{lemma1-eq4}, we can obtain the following result
\begin{equation*}
\begin{aligned}
  & k\left( m,M,f \right) \Phi \left( f\left( A \right) \right)-\frac{\beta }{2}\left[ \left( M+m \right)\Phi \left( A \right)-Mm-\Phi {{\left( A \right)}^{2}} \right] \\
 & \le f\left( \Phi \left( A \right) \right) \\
 & \le \frac{1}{k\left( m,M,f \right)}  \left\{ \Phi \left( f\left( A \right) \right)+\frac{\beta }{2}\left[ \left( M+m \right)\Phi \left( A \right)-Mm-\Phi \left( {{A}^{2}} \right) \right] \right\}, \\
\end{aligned}
\end{equation*}
where
$ k(m,M,f)=\min \left\{\frac{L(t)}{f(t)} :  t \in[m,M] \right\}$ and
$L(t)$ is defined by \eqref{lineL}.
\end{remark}

\medskip

In the next corollary  we give a  refinement of  converse of  C-D-J inequality (see e.g.\ \cite{mpst2000}) $$\frac{1}{K(m,M,f)} \Phi(f(A)) \leq f(\Phi(A)) \leq K(m,M,f)  \Phi(f(A)),$$ for every strictly convex function $f$  on $[m,M]$, where
$K(m,M,f)>1$ is defined by \eqref{const-K}.
\begin{corollary} \label{cor1}
Let the assumptions of Theorem~\ref{th2} hold and $f$ be strictly  convex on $[m,M]$. Then
\begin{equation*}
\begin{aligned}
  & \frac{1}{K\left( m,M,f \right)}\Phi \left( f\left( A \right) \right) \\
 & \le \frac{1}{K\left( m,M,f \right)}\left\{ \Phi \left( f\left( A \right) \right)+\frac{\alpha }{2}\left[ \left( M+m \right)\Phi \left( A \right)-Mm-\Phi \left( {{A}^{2}} \right) \right] \right\} \\
 & \le f\left( \Phi \left( A \right) \right) \\
 & \le K\left( m,M,f \right)\Phi \left( f\left( A \right) \right)-\frac{\alpha }{2}\left[ \left( M+m \right)\Phi \left( A \right)-Mm-\Phi {{\left( A \right)}^{2}} \right] \\
 & \le K\left( m,M,f \right)\Phi \left( f\left( A \right) \right), \\
\end{aligned}
\end{equation*}
where $K(m,M,f)>1$ is defined by \eqref{const-K}.
\end{corollary}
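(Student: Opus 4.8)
\emph{Proof proposal for Corollary~\ref{cor1}.}

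The plan is to obtain the four-term chain of Corollary~\ref{cor1} by bracketing the two-sided estimate \eqref{th2-1} of Theorem~\ref{th2} between two essentially trivial inequalities. Concretely, I will append one inequality on the far left and one on the far right of \eqref{th2-1}, each of which holds because (a) for a strictly convex $f$ one may take the constant $\alpha$ to be $\ge 0$, (b) $K(m,M,f)>1>0$, and (c) the two correction expressions occurring in \eqref{th2-1} are positive semidefinite.

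First I would settle the two facts about the constants. Since $f$ is twice continuously differentiable and strictly convex on $[m,M]$, its second derivative is nonnegative there, so the lower bound $\alpha$ for $f''$ may be taken with $0\le\alpha$; we assume this from now on. For the strict inequality $K(m,M,f)>1$: strict convexity gives $f(t)<L(t)$ for every $t\in(m,M)$, while $f(m)=L(m)$ and $f(M)=L(M)$; as $f>0$ on $[m,M]$, the continuous function $t\mapsto L(t)/f(t)$ equals $1$ at the two endpoints and is $>1$ on the open interval, so its maximum over the compact set $[m,M]$, namely $K(m,M,f)$, is strictly larger than $1$. This is the only place where \emph{strict} (rather than mere) convexity is used.

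Next I would verify positivity of the correction terms. Because $Sp(A)\subseteq[m,M]$, the self-adjoint operator $(M-A)(A-m)=(M+m)A-Mm-A^{2}$ is positive; applying the unital positive map $\Phi$ gives $(M+m)\Phi(A)-Mm-\Phi(A^{2})\ge 0$. Similarly $m\le\Phi(A)\le M$ since $\Phi$ is unital and positive, whence $(M-\Phi(A))(\Phi(A)-m)=(M+m)\Phi(A)-Mm-\Phi(A)^{2}\ge 0$. Combined with $\alpha\ge 0$ and $K(m,M,f)>0$, the first of these inequalities yields the leftmost link
$$\frac{1}{K(m,M,f)}\Phi(f(A))\le\frac{1}{K(m,M,f)}\Big\{\Phi(f(A))+\frac{\alpha}{2}\big[(M+m)\Phi(A)-Mm-\Phi(A^{2})\big]\Big\},$$
and the second yields the rightmost link
$$K(m,M,f)\Phi(f(A))-\frac{\alpha}{2}\big[(M+m)\Phi(A)-Mm-\Phi(A)^{2}\big]\le K(m,M,f)\Phi(f(A)).$$
Inserting \eqref{th2-1} of Theorem~\ref{th2} between these two completes the chain, and $K(m,M,f)>1$ was shown above.

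I do not anticipate a real obstacle: once the sign of $\alpha$ and of the two correction terms is pinned down, the corollary is a pure bookkeeping consequence of Theorem~\ref{th2}. The only step requiring a genuine (if short) argument is the strict estimate $K(m,M,f)>1$, which rests on a strictly convex positive function lying strictly below its chord on $(m,M)$ while meeting it at the endpoints.
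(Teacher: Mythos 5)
Your proposal is correct and follows essentially the same route as the paper: apply Theorem~\ref{th2} and append the two outer links using $\alpha\ge 0$, $K(m,M,f)>0$, and the operator inequalities $(M+m)\Phi(A)-Mm-\Phi(A^{2})\ge 0$ and $(M+m)\Phi(A)-Mm-\Phi(A)^{2}\ge 0$. If anything, you are slightly more careful than the paper's two-line proof, which only records the first of the two correction terms and asserts $0<\alpha\le f''$ where your weaker $\alpha\ge 0$ (together with the separate argument that $K(m,M,f)>1$) is what is actually needed.
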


\begin{proof}
Since $f$ is  strictly convex, then $0<\alpha \le f''$ on $[m,M]$. Given that $(M+m) \Phi(A)-Mm-\Phi(A^2) \geq 0$ is valid, the desired result follows by applying Theorem~\ref{th2}.
\end{proof}

%\begin{remark}\label{rem1}
%If $f>$ is operator convex on $[m,M]$, then
%\begin{equation}\label{rem1-1}
%\begin{array}{cl}
%   &\displaystyle \frac{1}{K(m,M,f)} \Phi(f(A)) \\[1ex]
%  \leq & \displaystyle  \frac{1}{K(m,M,f)} \left\{\Phi(f(A))+\frac{\max\{\alpha, \beta \}}{2} \left[ (M+m) \Phi(A)-Mm-\Phi(A^2)\right] \right\}   \\[2ex]
%  \leq & \displaystyle f(\Phi(A))
%  \leq  \Phi(f(A)),
%\end{array}
%\end{equation}
%\end{remark}

%%%%%%%%%%%%%%%%%%%%%%%%%%%%%%%%%%%%%%%%%%%%%%%%%%%%%%%%%%%%%%%% 3 %%%%%%%%%%%%%%%%%%%%%%%%%%%%%%%%%%%%%%%%%%%%%%%

\section{\bf Some applications}\label{section3}
\vskip0.4 true cm
The {\it non-commutative perspective} of the continuous function $f$ is defined by
\[{{\mathcal{P}}_{f}}\left( A|B \right):={{A}^{\frac{1}{2}}}f\left( {{A}^{-\frac{1}{2}}}B{{A}^{-\frac{1}{2}}} \right){{A}^{\frac{1}{2}}},\]
for every self-adjoint operator $B$ and strictly positive operator $A$ on a Hilbert space $\mathcal{H}$ (see \cite{Eba}). This notion is a generalization of the notion of the {\it commutative perspective} considered by Effros \cite{E}. Similar studies have been done in \cite{FFS}.

\medskip

The following result providing upper and lower bounds for the non-commutative perspective holds (for similar result see \cite[Theorem 1]{dragomir}). We use the notation $A{{\natural}_{p}}B$ to mean that ${{A}^{\frac{1}{2}}}{{\left( {{A}^{-\frac{1}{2}}}B{{A}^{-\frac{1}{2}}} \right)}^{p}}{{A}^{\frac{1}{2}}}$, $p\in \mathbb{R}$.
\begin{proposition} \label{prop3_1}
Let $A\in \mathcal{B}\left( \mathcal{H} \right)$ be strictly a positive operator and let $B \in \mathcal{B}\left( \mathcal{H} \right)$ be a positive operator which satisfies in the sandwich condition $mA\le B\le MA$ where $0<m<M$. If $f$ is twice continuously differentiable such that $\alpha \le f''\le \beta $, then
\[\begin{aligned}
   \frac{\beta }{2}\left( A{{\natural}_{2}}B+MmA-\left( M+m \right)B \right)&\le {{\mathcal{P}}_{f}}\left( A|B \right)-{{L}_{f}}\left( A|B \right) \\
 & \le \frac{\alpha }{2}\left( A{{\natural}_{2}}B+MmA-\left( M+m \right)B \right),
\end{aligned}\]
where
\[{{L}_{f}}\left( A|B \right) = \frac{1}{M-m}\left\{ \left( B-mA \right)f\left( M \right)+\left( MA-B \right)f\left( m \right) \right\}.\]
\end{proposition}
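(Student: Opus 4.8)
The plan is to reduce the claim to the scalar inequality established inside the proof of Lemma~\ref{lemma1} and then transport it through the congruence $X\mapsto A^{1/2}XA^{1/2}$.

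First I would put $X:=A^{-\frac12}BA^{-\frac12}$. Since $A$ is strictly positive, $A^{-\frac12}$ exists, and congruence of the sandwich condition $mA\le B\le MA$ by $A^{-\frac12}$ gives $mI\le X\le MI$; hence $Sp(X)\subseteq[m,M]$. Then I would feed $X$ into the two scalar estimates behind Lemma~\ref{lemma1}, namely inequality \eqref{lemma1-5} together with its $\beta$-counterpart obtained there from the convexity of $\frac{\beta}{2}x^{2}-f(x)$,
$$
L(t)-\frac{\beta}{2}\left( (M+m)t-mM-t^{2} \right)\ \le\ f(t)\ \le\ L(t)-\frac{\alpha}{2}\left( (M+m)t-mM-t^{2} \right),\qquad t\in[m,M],
$$
with $L$ as in \eqref{lineL}. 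By the continuous functional calculus for the self-adjoint operator $X$ this yields
$$
L(X)-\frac{\beta}{2}\left( (M+m)X-mM\,I-X^{2} \right)\ \le\ f(X)\ \le\ L(X)-\frac{\alpha}{2}\left( (M+m)X-mM\,I-X^{2} \right).
$$

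Next I would conjugate every term by $A^{1/2}$; since $P\le Q$ implies $A^{1/2}PA^{1/2}\le A^{1/2}QA^{1/2}$, the inequalities are preserved, and it only remains to identify the operators. By definition $A^{1/2}f(X)A^{1/2}={{\mathcal{P}}_{f}}\left( A|B \right)$. The identities $A^{1/2}XA^{1/2}=B$ and $A^{1/2}IA^{1/2}=A$, substituted into \eqref{lineL}, give $A^{1/2}L(X)A^{1/2}=\frac{1}{M-m}\left\{ (B-mA)f(M)+(MA-B)f(m) \right\}={{L}_{f}}\left( A|B \right)$. Finally, from $A{{\natural}_{2}}B=A^{1/2}\left( A^{-\frac12}BA^{-\frac12} \right)^{2}A^{1/2}=A^{1/2}X^{2}A^{1/2}$ we obtain $A^{1/2}\left( (M+m)X-mM\,I-X^{2} \right)A^{1/2}=(M+m)B-MmA-A{{\natural}_{2}}B$. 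Substituting these into the conjugated inequalities and rearranging — the factor $-\frac{\alpha}{2}$ (resp.\ $-\frac{\beta}{2}$) turning $(M+m)B-MmA-A{{\natural}_{2}}B$ into $\frac{\alpha}{2}\left( A{{\natural}_{2}}B+MmA-(M+m)B \right)$ (resp.\ the analogous $\beta$-expression) — gives precisely the two-sided bound in the statement.

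There is no real obstacle: the argument is essentially a transcription of Lemma~\ref{lemma1} into perspective form. The only points deserving care are (i) checking that congruence by $A^{-\frac12}$ turns the sandwich condition into $mI\le X\le MI$, so that the functional calculus on $[m,M]$ is legitimate, and (ii) recognizing the three congruence identities $A^{1/2}XA^{1/2}=B$, $A^{1/2}X^{2}A^{1/2}=A{{\natural}_{2}}B$, $A^{1/2}IA^{1/2}=A$, which collapse the line $L$ to ${{L}_{f}}\left( A|B \right)$ and the quadratic correction term to $A{{\natural}_{2}}B+MmA-(M+m)B$.
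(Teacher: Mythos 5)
Your proof is correct and follows essentially the same route as the paper: both derive the scalar two-sided bound $L(t)-\tfrac{\beta}{2}(t-m)(M-t)\le f(t)\le L(t)-\tfrac{\alpha}{2}(t-m)(M-t)$ from the convexity of $f(x)-\tfrac{\alpha}{2}x^{2}$ and $\tfrac{\beta}{2}x^{2}-f(x)$, then substitute $t\mapsto A^{-\frac12}BA^{-\frac12}$ and conjugate by $A^{\frac12}$. Your write-up is merely more explicit about the spectral localization of $X$ and the congruence identities that the paper leaves implicit.
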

\begin{proof}
Since $\alpha \le f''\le \beta $, so two functions ${{g}_{\alpha }}\left( x \right)\equiv f\left( x \right)-\frac{\alpha }{2}{{x}^{2}}$ and ${{g}_{\beta }}\left( x \right)\equiv \frac{\beta }{2}{{x}^{2}}-f\left( x \right)$ are convex. Hence Jensen's inequality works for ${{g}_{\alpha }}\left( x \right)$ and ${{g}_{\beta }}\left( x \right)$. After substitution one obtains for any $t\in \left[ m,M \right]$,
$$
L\left( t \right)-\frac{\beta }{2}\left( t-m \right)\left( M-t \right)\le f\left( t \right),
$$
and
$$
f\left( t \right)\le L\left( t \right)-\frac{\alpha }{2}\left( t-m \right)\left( M-t \right),
$$
where
	\[L\left( t \right)\equiv \frac{1}{M-m}\left\{ \left( t-m \right)f\left( M \right)+\left( M-t \right)f\left( m \right) \right\}.\]
Thus we have
$$
\frac{\alpha}{2}(t-m)(M-t) \leq L(t) -f(t) \leq \frac{\beta}{2}(t-m)(M-t).
$$
Replacing $t$ with the positive operator ${{A}^{-\frac{1}{2}}}B{{A}^{-\frac{1}{2}}}$ and then multiplying both sides by ${{A}^{\frac{1}{2}}}$ we deduce the desired result.
\end{proof}

\medskip

The following is a simple consequence of Proposition \ref{prop3_1}.
\begin{remark}\label{remark3_1}
If we take $f_p(t) = \frac{1-t^p}{p}$ for $-1 \leq p \leq 1$ with $p \ne 0$ in Proposition \ref{prop3_1}, then we have the bounds of Tsallis relative operator entropy \cite{YKF2005} defined by

\[{{T}_{p}}\left( A|B \right) := \frac{A{{\natural}_{p}}B-A}{p},\quad \text{ for }-1\le p\le 1\text{ and }p\ne 0,\]
as follows.
Since $f_p''(t) =(1-p)t^{p-2} \geq 0$ for $-1 \leq p \leq 1$, we have $(1-p)M^{p-2}\leq f_p''(t) \leq (1-p)m^{p-2}$ for $0< m \leq t \leq M$ so that we can take $\alpha = (1-p)M^{p-2}$ and $\beta = (1-p)m^{p-2}$ in Proposition \ref{prop3_1}.
In addition, we obtain the following inequaliteis by the use of Proposition \ref{prop3_1} with simple calculations
\begin{eqnarray*}
&&-\frac{\alpha}{2} \left(A\natural_2 B +MmA-(M+m)B \right) \\
&& \leq T_p(A|B) + \frac{1}{p(M-m)}\left\{ (B-mA) (1-M^p) +(MA-B)(1-m^p)\right\} \\
&&-\frac{\beta}{2} \left(A\natural_2 B +MmA-(M+m)B \right).
\end{eqnarray*}
Thus we have the inequalities  with simple calculations
\[{\widetilde L_{m,M,p}}\left( {A\left| B \right.} \right) - \frac{{\left( {1 - p} \right)}}{{2{M^{2 - p}}}}{K_{m,M}}\left( {A,B} \right) \le {T_p}\left( {A\left| B \right.} \right) \le {\widetilde L_{m,M,p}}\left( {A\left| B \right.} \right) - \frac{{\left( {1 - p} \right)}}{{2{m^{2 - p}}}}{K_{m,M}}\left( {A,B} \right),\]
where
\[\left\{ \begin{array}{l}
{\widetilde L_{m,M,p}}\left( {A\left| B \right.} \right) = \frac{-1}{{p\left( {M - m} \right)}}\left\{ {\left( {M - m + Mm\left( {{M^{p - 1}} - {m^{p - 1}}} \right)} \right)A - \left( {{M^p} - {m^p}} \right)B} \right\},\\
{K_{m,M}}\left( {A,B} \right) = A{\natural _2}B + MmA - \left( {M + m} \right)B.
\end{array} \right.\]
Taking the limit $p \to 0$, we have the bounds of relative operator entropy \cite{FK1989} defined by
\[S\left( A|B \right):={{A}^{\frac{1}{2}}}\log \left( {{A}^{-\frac{1}{2}}}B{{A}^{-\frac{1}{2}}} \right){{A}^{\frac{1}{2}}},\]
as
\[{{\widetilde{L}}_{m,M}}\left( A|B \right)-\frac{1}{2{{M}^{2}}}{{K}_{m,M}}\left( A,B \right)\le S\left( A|B \right)\le {{\widetilde{L}}_{m,M}}\left( A|B \right)-\frac{1}{2{{m}^{2}}}{{K}_{m,M}}\left( A,B \right),\]
where
\[{\widetilde L_{m,M}}\left( {A\left| B \right.} \right) = \frac{1}{{\left( {M - m} \right)}}\left\{ {\left( {B - mA} \right)\log M + \left( {MA - B} \right)\log m} \right\},\]
due to $\lim_{p \to 0}T_p(A|B) = S(A|B)$ and $\lim_{p\to 0} f_p(t)=-\log t.$
\end{remark}

\medskip

\begin{proposition} \label{prop_3_2}
Let $A,B\in \mathcal{B}\left( \mathcal{H} \right)$ be two positive operators which satisfy the sandwich condition $mA\le B\le MA$ where $m<M$  and $\Phi :\mathcal{B}(\mathcal{H}) \rightarrow \mathcal{B}(\mathcal{K})$ be unital positive linear map on $\mathcal{B}\left( \mathcal{H} \right)$. If $f$ is twice continuously differentiable such that $\alpha \le f''\le \beta $, then
\[\begin{aligned}
  & \frac{\alpha -\beta }{2}\left\{ \left( M+m \right)\Phi \left( B \right)-Mm\Phi \left( A \right) \right\}+\frac{1}{2}\left( \beta \left( \Phi \left( A \right){{\natural}_{2}}\Phi \left( B \right) \right)-\alpha \Phi \left( A{{\natural}_{2}}B \right) \right) \\
 & \le {{\mathcal{P}}_{f}}\left( \Phi \left( A \right)|\Phi \left( B \right) \right)-\Phi \left( {{\mathcal{P}}_{f}}\left( A|B \right) \right) \\
 & \le \frac{\beta -\alpha }{2}\left\{ \left( M+m \right)\Phi \left( B \right)-Mm\Phi \left( A \right) \right\}+\frac{1}{2}\left( \alpha \left( \Phi \left( A \right){{\natural}_{2}}\Phi \left( B \right) \right)-\beta \Phi \left( A{{\natural}_{2}}B \right) \right). \\
\end{aligned}\]
\end{proposition}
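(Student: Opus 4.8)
This is the non-commutative perspective analogue of Theorem~\ref{th1}, and the plan is to prove it in the same spirit: invoke Proposition~\ref{prop3_1} twice and then splice the two resulting operator estimates together through $\Phi$. It is convenient to abbreviate, for positive operators $X,Y$ with $mX\le Y\le MX$,
\[
Q\left( X,Y \right):=\left( M+m \right)Y-MmX-X{{\natural}_{2}}Y\ge 0,
\]
so that the conclusion of Proposition~\ref{prop3_1}, rewritten in terms of ${{L}_{f}}$, reads
\[
\frac{\alpha }{2}Q\left( X,Y \right)\le {{L}_{f}}\left( X|Y \right)-{{\mathcal{P}}_{f}}\left( X|Y \right)\le \frac{\beta }{2}Q\left( X,Y \right).
\]

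First I would apply this to the pair $\left( A,B \right)$ and then apply $\Phi$. Since $\Phi$ is linear and positive and ${{L}_{f}}\left( A|B \right)=\frac{1}{M-m}\left\{ \left( B-mA \right)f\left( M \right)+\left( MA-B \right)f\left( m \right) \right\}$ is linear in $\left( A,B \right)$, we get $\Phi\left( {{L}_{f}}\left( A|B \right) \right)={{L}_{f}}\left( \Phi\left( A \right)|\Phi\left( B \right) \right)$ and $\Phi\left( Q\left( A,B \right) \right)=\left( M+m \right)\Phi\left( B \right)-Mm\Phi\left( A \right)-\Phi\left( A{{\natural}_{2}}B \right)$, hence
\[
\frac{\alpha }{2}\Phi\left( Q\left( A,B \right) \right)\le {{L}_{f}}\left( \Phi\left( A \right)|\Phi\left( B \right) \right)-\Phi\left( {{\mathcal{P}}_{f}}\left( A|B \right) \right)\le \frac{\beta }{2}\Phi\left( Q\left( A,B \right) \right).
\]
Next, $mA\le B\le MA$ together with positivity and linearity of $\Phi$ gives $m\Phi\left( A \right)\le \Phi\left( B \right)\le M\Phi\left( A \right)$, so Proposition~\ref{prop3_1} also applies to the pair $\left( \Phi\left( A \right),\Phi\left( B \right) \right)$, yielding
\[
\frac{\alpha }{2}Q\left( \Phi\left( A \right),\Phi\left( B \right) \right)\le {{L}_{f}}\left( \Phi\left( A \right)|\Phi\left( B \right) \right)-{{\mathcal{P}}_{f}}\left( \Phi\left( A \right)|\Phi\left( B \right) \right)\le \frac{\beta }{2}Q\left( \Phi\left( A \right),\Phi\left( B \right) \right).
\]

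To derive the upper bound of the proposition, I would combine the estimate ${{\mathcal{P}}_{f}}\left( \Phi\left( A \right)|\Phi\left( B \right) \right)\le {{L}_{f}}\left( \Phi\left( A \right)|\Phi\left( B \right) \right)-\frac{\alpha }{2}Q\left( \Phi\left( A \right),\Phi\left( B \right) \right)$ from the last display with the estimate $\Phi\left( {{\mathcal{P}}_{f}}\left( A|B \right) \right)\ge {{L}_{f}}\left( \Phi\left( A \right)|\Phi\left( B \right) \right)-\frac{\beta }{2}\Phi\left( Q\left( A,B \right) \right)$ from the previous one; subtracting, the ${{L}_{f}}$ terms cancel and one is left with
\[
{{\mathcal{P}}_{f}}\left( \Phi\left( A \right)|\Phi\left( B \right) \right)-\Phi\left( {{\mathcal{P}}_{f}}\left( A|B \right) \right)\le \frac{\beta }{2}\Phi\left( Q\left( A,B \right) \right)-\frac{\alpha }{2}Q\left( \Phi\left( A \right),\Phi\left( B \right) \right),
\]
which, after expanding and collecting the coefficients of $\left( M+m \right)\Phi\left( B \right)-Mm\Phi\left( A \right)$, of $\Phi\left( A \right){{\natural}_{2}}\Phi\left( B \right)$ and of $\Phi\left( A{{\natural}_{2}}B \right)$, is exactly the stated upper bound. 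The lower bound follows by the symmetric combination, which amounts to interchanging $\alpha $ and $\beta $. The main point to watch is the bookkeeping: $\Phi\left( A{{\natural}_{2}}B \right)$ and $\Phi\left( A \right){{\natural}_{2}}\Phi\left( B \right)$ are genuinely different operators because the quadratic perspective does not commute with $\Phi$, and tracking which one arises from applying Proposition~\ref{prop3_1} \emph{before} acting by $\Phi$ and which one \emph{after} is precisely what forces both constants $\alpha $ and $\beta $ to appear, unlike in the single-constant estimate of Proposition~\ref{prop3_1}. One should also record, as in Proposition~\ref{prop3_1}, that ${{\mathcal{P}}_{f}}\left( \Phi\left( A \right)|\Phi\left( B \right) \right)$ only makes sense when $\Phi\left( A \right)$ is invertible, which holds whenever $A$ is strictly positive since $\Phi$ is unital.
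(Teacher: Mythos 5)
Your proposal is correct and follows essentially the same route as the paper: apply Proposition~\ref{prop3_1} once to the pair $\left( \Phi\left( A \right),\Phi\left( B \right) \right)$ (noting $m\Phi\left( A \right)\le \Phi\left( B \right)\le M\Phi\left( A \right)$), once to $\left( A,B \right)$ followed by $\Phi$, and subtract using the identity $\Phi\left( {{L}_{f}}\left( A|B \right) \right)={{L}_{f}}\left( \Phi\left( A \right)|\Phi\left( B \right) \right)$ coming from linearity. Your bookkeeping of which of $\Phi\left( A{{\natural}_{2}}B \right)$ and $\Phi\left( A \right){{\natural}_{2}}\Phi\left( B \right)$ carries $\alpha$ versus $\beta$ matches the stated bounds.
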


\begin{proof}
In the inequalities of Proposition \ref{prop3_1}, we replace $\Phi(A)$, $\Phi(B)$ with $A, B$, then we have
\begin{eqnarray*}
&& \frac{\beta}{2}\left(\Phi(A) \natural_2\Phi(B) +Mm\Phi(A)-(M+m)\Phi(B) \right) \\
&& \leq \mathcal{P}_f (\Phi(A)|\Phi(B)) -L_f(\Phi(A)|\Phi(B) \\
&& \leq \frac{\alpha}{2}\left(\Phi(A) \natural_2\Phi(B) +Mm\Phi(A)-(M+m)\Phi(B) \right).
\end{eqnarray*}
Taking $\Phi$ in the both sides of the inequalities of Proposition \ref{prop3_1}, we have
\begin{eqnarray*}
&& \frac{\beta}{2}\left(\Phi(A\natural_2B) +Mm\Phi(A)-(M+m)\Phi(B)\right) \\
&& \leq \Phi\left( \mathcal{P}_f (A|B)\right)-\Phi(L_f(A|B)) \\
&& \leq \frac{\alpha}{2}\left(\Phi(A\natural_2B) +Mm\Phi(A)-(M+m)\Phi(B)\right).
\end{eqnarray*}
Thus we have the desired result since $L_f(\Phi(A)|\Phi(B)=\Phi(L_f(A|B))$.

\end{proof}

\medskip

Let $\rho$ be strictly positive operator with unit trace. (Such an operator is often called a {\it density operator} in quantum physics \cite{OP2004}.)
Then {\it von Neumann entropy} (quantum mechanical entropy) \cite[Section 4.3]{Bhatia2007},  \cite{OP2004} is defined by
$S(\rho) := -Tr[\rho \log \rho]$. In addition, the {\it quantum Tsallis entropy} \cite{FYK2004,Furu2008} is defined by $S_p(\rho) := \frac{Tr[\rho^{1-p} -\rho]}{p}$ for $-1\leq p \leq 1$ with $p\neq 0$. They have a non-negativity $S(\rho) \geq 0$ and $S_p(\rho) \geq 0$.
The Tsallis relative entropy \cite{FYK2004} in quantum mechanical system is also defined by
$$
D_p(\rho |\sigma) := \frac{Tr[\rho-\rho^{1-p}\sigma^p]}{p},
$$
for two density operators $\rho$ and $\sigma$, and a parameter $p$ such that $|p| \leq 1$ with $p \ne 0$.

\begin{remark}\label{remark3_2}
If we take two density operators $\rho$ and $\sigma$, $\Phi(\rho) \equiv Tr[\rho]$ and $f_p(t) = \frac{1-t^p}{p}$ for $-1 \leq p \leq 1$ with $p \ne 0$ in Proposition \ref{prop_3_2}, then we have the following inequalities by the similar way to Remark \ref{remark3_1},
\[\begin{array}{l}
\frac{{\left( {1 - p} \right)}}{2}\left( {{M^{p - 2}} - {m^{p - 2}}} \right)\left( {M + m - Mm} \right) + \frac{{\left( {1 - p} \right)}}{2}\left( {{m^{p - 2}} - {M^{p - 2}}Tr\left[ {\rho {{\left( {{\rho ^{ - 1/2}}\sigma {\rho ^{ - 1/2}}} \right)}^2}} \right]} \right)\\
 \le   Tr\left[ {{T_p}\left( {\rho \left| \sigma  \right.} \right)} \right]\\
 \le \frac{{\left( {1 - p} \right)}}{2}\left( {{m^{p - 2}} - {M^{p - 2}}} \right)\left( {M + m - Mm} \right) + \frac{{\left( {1 - p} \right)}}{2}\left( {{M^{p - 2}} - {m^{p - 2}}Tr\left[ {\rho {{\left( {{\rho ^{ - 1/2}}\sigma {\rho ^{ - 1/2}}} \right)}^2}} \right]} \right).
\end{array}\]

It is known the relation $D_p(\rho |\sigma) \leq -Tr[T_p(\rho | \sigma)]$ for $0 <  p \leq 1$ in \cite[Theorem 2.2]{FYK2004}, so that we obtain the following inequality for $0 <  p \leq 1$:
\[\begin{aligned}
  & {{D}_{p}}\left( \rho |\sigma  \right) \\
 & \le \frac{\left( 1-p \right)}{2}\left( {{m}^{p-2}}-{{M}^{p-2}} \right)\left( M+m-Mm \right)+\frac{\left( 1-p \right)}{2}\left( {{M}^{p-2}}Tr\left[ \rho {{\left( {{\rho }^{-\frac{1}{2}}}\sigma {{\rho }^{-\frac{1}{2}}} \right)}^{2}} \right] -{{m}^{p-2}}\right). \\
\end{aligned}\]
\end{remark}

\medskip

We have the following corollary which improves the non-negativity of quantum entropy and quantum Tsallis entropy.
\begin{corollary} \label{cor_3_2}
For a density operator $\rho >0$ and $-1\leq p \leq 1$ with $p\neq 0$, we have
$$
S_p(\rho) \geq \frac{(1-p)(M^{p+1}-m^{p+1})(1-M)(1-m)}{2m^{p+1}M^{p+1}} \geq 0.
$$
\end{corollary}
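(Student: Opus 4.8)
The plan is to apply Theorem~\ref{th1} to $\rho$ itself, choosing $f(t)=t^{1-p}$, $A=\rho$, and $\Phi=Tr$. Writing $Sp(\rho)\subseteq[m,M]$ (so $0<m\le M\le 1$, since $\rho$ is a density operator), the relevant quantities are $\Phi(A)=Tr[\rho]=1$, $\Phi(A)^{2}=1$, $\Phi(f(A))=Tr[\rho^{1-p}]$, $\Phi(A^{2})=Tr[\rho^{2}]$, and in addition $S_p(\rho)=\tfrac{1}{p}\bigl(Tr[\rho^{1-p}]-1\bigr)$ together with $Tr[\rho^{2}]\le(Tr[\rho])^{2}=1$. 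The admissible constants $\alpha\le f''\le\beta$ come from $f''(t)=-p(1-p)t^{-(p+1)}$, which is monotone on $[m,M]$: for $0<p\le 1$ it is increasing, so $\alpha=-p(1-p)m^{-(p+1)}$, $\beta=-p(1-p)M^{-(p+1)}$, while for $-1\le p<0$ the function $f$ is convex and the endpoint values swap. In both regimes $\tfrac{\beta-\alpha}{2}=\tfrac{|p|(1-p)}{2}\bigl(m^{-(p+1)}-M^{-(p+1)}\bigr)>0$.

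For $0<p\le 1$, substitute these data into \eqref{th1-1} and solve for $Tr[\rho^{1-p}]$, obtaining $Tr[\rho^{1-p}]\ge f(1)-\tfrac{\beta-\alpha}{2}\bigl((M+m)-Mm\bigr)-\tfrac{\alpha}{2}+\tfrac{\beta}{2}Tr[\rho^{2}]$. Since $\beta<0$, the estimate $Tr[\rho^{2}]\le 1$ is used in the favourable direction, $\tfrac{\beta}{2}Tr[\rho^{2}]\ge\tfrac{\beta}{2}$; then $f(1)=1$ and $-\tfrac{\alpha}{2}+\tfrac{\beta}{2}=\tfrac{\beta-\alpha}{2}$ let the right-hand side collapse to $1+\tfrac{\beta-\alpha}{2}\bigl(1-(M+m)+Mm\bigr)=1+\tfrac{\beta-\alpha}{2}(1-M)(1-m)$. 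Dividing $Tr[\rho^{1-p}]-1\ge\tfrac{\beta-\alpha}{2}(1-M)(1-m)$ by $p>0$ and using $\tfrac{1}{p}\cdot\tfrac{\beta-\alpha}{2}=\tfrac{1-p}{2}\bigl(m^{-(p+1)}-M^{-(p+1)}\bigr)=\tfrac{(1-p)(M^{p+1}-m^{p+1})}{2m^{p+1}M^{p+1}}$ gives exactly the claimed lower bound for $S_p(\rho)$.

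For $-1\le p<0$ I would run the identical substitution through the companion inequality \eqref{th1-2}. Here $\alpha>0$, so $\tfrac{\alpha}{2}Tr[\rho^{2}]$ is bounded \emph{above} by $\tfrac{\alpha}{2}$ (again by $Tr[\rho^{2}]\le 1$), and the same algebra yields $Tr[\rho^{1-p}]-1\le-\tfrac{\beta-\alpha}{2}(1-M)(1-m)$; dividing by the negative number $p$ reverses the inequality and recovers the same bound for $S_p(\rho)$. The trailing ``$\ge 0$'' is then immediate: $1-p\ge 0$, $M\ge m$ with $p+1\ge 0$ gives $M^{p+1}\ge m^{p+1}$, $m,M\le 1$ gives $(1-M)(1-m)\ge 0$, and $m^{p+1}M^{p+1}>0$.

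The step that I expect to require the most care is the sign bookkeeping: $\tfrac{\beta-\alpha}{2}$ is positive in both cases, yet $\alpha$ and $\beta$ individually interchange with the sign of $p$, which is precisely why \eqref{th1-1} is the right tool for $p>0$ and \eqref{th1-2} for $p<0$, and one must verify that the crude bound $Tr[\rho^{2}]\le 1$ falls on the correct side of the chosen inequality in each regime (as well as that the instance of Theorem~\ref{th1} with $\Phi=Tr$ and evaluation point $\Phi(\rho)=1$ is being used legitimately). Everything else is the one-line simplification of $1+\tfrac{\beta-\alpha}{2}\bigl(1-(M+m)+Mm\bigr)$.
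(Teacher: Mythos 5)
Your argument is correct and lands on exactly the same mechanism as the paper: apply Theorem~\ref{th1} with $\Phi=Tr$ evaluated at $\Phi(\rho)=Tr[\rho]=1$, discard the $Tr[\rho^2]$ term via $Tr[\rho^2]\le 1$ in the sign-favourable direction, and simplify $1-(M+m)+Mm=(1-M)(1-m)$. The only structural difference is your choice of test function: you take $f(t)=t^{1-p}$, whose concavity/convexity flips with the sign of $p$, which forces you to split into two cases and invoke \eqref{th1-1} for $p>0$ and \eqref{th1-2} for $p<0$. The paper instead takes $f_p(t)=\frac{t-t^{1-p}}{p}$, which satisfies $f_p''(t)=(1-p)t^{-(p+1)}\ge 0$ uniformly for all $p\in[-1,1]\setminus\{0\}$ and has $Tr[f_p(\rho)]=-S_p(\rho)$ and $f_p(1)=0$, so a single application of \eqref{th1-2} covers both regimes at once; your case analysis is a correct but avoidable detour. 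Your sign bookkeeping ($\beta\le 0$ for $p>0$, $\alpha\ge 0$ for $p<0$) checks out, and the caveat you raise about $\Phi=Tr$ not being unital (so $\Phi(Mm\cdot I)\ne Mm$ and $\Phi(L(A))\ne L(\Phi(A))$ when $\dim\mathcal{H}>1$) is a genuine gap in the formal application of Theorem~\ref{th1} here, but it is one the paper itself commits identically, so it does not distinguish your proof from theirs.
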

\begin{proof}
We take $\Phi(\rho) \equiv Tr[\rho] =1$ and $f_p(t) = \frac{t-t^{1-p}}{p}$ for $-1\leq p \leq 1$ with $p\neq 0$ on $(0,\infty)$ in \eqref{th1-2}. Since the spectrum $Sp(\rho)=(0,1)$, we take the interval $[m,M]$ such that $0<m \leq M \leq 1$. Then $\alpha = \frac{1-p}{M^{p+1}}$ and $\beta = \frac{1-p}{m^{p+1}}$. Since $Tr[\rho^2] \leq 1$, the inequality \eqref{th1-2} gives
\[\begin{aligned}
   \frac{Tr\left[ \rho -{{\rho }^{1-p}} \right]}{p}&\le \frac{\left( 1-p \right)}{2}\left( \frac{1}{{{m}^{p+1}}}-\frac{1}{{{M}^{p+1}}} \right)\left( M+m-Mm \right)+\frac{\left( 1-p \right)}{2}\left( \frac{1}{{{M}^{p+1}}}Tr\left[ {{\rho }^{2}} \right]-\frac{1}{{{m}^{p+1}}} \right) \\
 & \le \frac{\left( 1-p \right)\left( {{M}^{p+1}}-{{m}^{p+1}} \right)\left( M-1 \right)\left( 1-m \right)}{2{{m}^{p+1}}{{M}^{p+1}}}\le 0,
\end{aligned}\]
which implies the desired inequality.
\end{proof}
\begin{remark}
Taking the limit $p \to 0$ in Corollary \ref{cor_3_2}, we have
$$
S(\rho) \geq \frac{(M-m)(1-M)(1-m)}{2mM} \geq 0,
$$
which improves the non-negativity of von Neumann entropy.
\end{remark}

\medskip

Now, we apply Corollary~\ref{cor1} on a power function $f(t) =t^r$, $t \in (0,\infty)$, $r\in \mathbb{R}$. If  $r \in (-\infty, 0) \cup (1,\infty)$, then $\gamma >0$. Moreover,  if $r \in [-1,0] \cup [1,2]$ then $f(t) =t^r$ is operator convex, but if $r \in [0,1]$ then $f(t) =-t^r$ is operator convex. Taking this into account, we obtain the following results.  Details of the proof are left to the interested reader.

\begin{corollary} \label{cor2}
Let $A$ be a self-adjoint operator with $Sp\left( A \right)\subseteq \left[ m,M \right]\subset I$ for some scalars $0<m<M$ and let $\Phi :\mathcal{B}\left( \mathcal{H} \right)\to \mathcal{B}\left( \mathcal{K} \right)$ be a unital  positive linear map.
\begin{enumerate}[(i)]
\item If $r \in (-\infty, -1) \cup (2,\infty)$, then
\begin{equation*}
\begin{aligned}
  & \frac{1}{K\left( m,M,r \right)}\Phi \left( {{A}^{r}} \right) \\
 & \le \frac{1}{K\left( m,M,r \right)}\left\{ \Phi \left( {{A}^{r}} \right)+\frac{\gamma }{2}\left[ \left( M+m \right)\Phi \left( A \right)-Mm-\Phi \left( {{A}^{2}} \right) \right] \right\} \\
 & \le \Phi {{\left( A \right)}^{r}} \\
 & \le K\left( m,M,r \right) \Phi \left( {{A}^{r}} \right)-\frac{\gamma }{2}\left[ \left( M+m \right)\Phi \left( A \right)-Mm-\Phi {{\left( A \right)}^{2}} \right] \\
 & \le K\left( m,M,r \right)\Phi \left( {{A}^{r}} \right), \\
\end{aligned}
\end{equation*}
\end{enumerate}
where
$$\gamma = r (r-1) \cdot \min \left\{ m^{r-2}, M^{r-2} \right\},$$
and $K(m,M,r)$ is a generalized Kantorovich constant
$$
 K(m,M,r):= \frac{(m M^r - M m^r) }{(r-1)
(M-m)}\left(\frac{r-1}{r} \frac{M^r - m^r}{m M^r - M m^r}
\right)^r, \quad  r \in {\mathbb{R}}.
$$
Of course, $K(m,M,0)=K(m,M,1)=1$.
\begin{enumerate}[(ii)]
\item If $r \in [-1,0] \cup [1,2]$, then
\begin{equation}\label{cor2-2}
\begin{aligned}
  & \frac{1}{K\left( m,M,r \right)}\Phi \left( {{A}^{r}} \right) \\
 & \le \frac{1}{K\left( m,M,r \right)}\left\{ \Phi \left( {{A}^{r}} \right)+\frac{\gamma }{2}\left[ \left( M+m \right)\Phi \left( A \right)-Mm-\Phi \left( {{A}^{2}} \right) \right] \right\} \\
 & \le \Phi {{\left( A \right)}^{r}} \\
 & \le \Phi \left( {{A}^{r}} \right). \\
\end{aligned}
\end{equation}
The last inequality is due to \cite[Corollary 1.22]{FMPS2005}.
\end{enumerate}

\begin{enumerate}[(iii)]
\item If $r \in [0,1]$, then
\begin{equation}\label{cor2-3}
\begin{aligned}
  & \frac{1}{K\left( m,M,r \right)}\Phi \left( {{A}^{r}} \right) \\
 & \ge \frac{1}{K\left( m,M,r \right)}\left\{ \Phi \left( {{A}^{r}} \right)+\frac{r\left( 1-r \right)}{2{{M}^{2-r}}}\left[ \left( M+m \right)\Phi \left( A \right)-Mm-\Phi \left( {{A}^{2}} \right) \right] \right\} \\
 & \ge \Phi {{\left( A \right)}^{r}} \\
 & \ge \Phi \left( {{A}^{r}} \right). \\
\end{aligned}
\end{equation}
The last inequality is due to \cite[Corollary 1.22]{FMPS2005}.
\end{enumerate}
\end{corollary}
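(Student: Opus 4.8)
The plan is to obtain all three parts of Corollary~\ref{cor2} as direct specialisations of Corollary~\ref{cor1}, Theorem~\ref{th2} and the Remark following it, once the behaviour of $f(t)=t^{r}$ on $[m,M]$ has been sorted out. The elementary facts I would use are: $f''(t)=r(r-1)t^{r-2}$, so $f$ is strictly convex on $(0,\infty)$ exactly when $r\in(-\infty,0)\cup(1,\infty)$ and strictly concave exactly when $r\in(0,1)$; since $t^{r-2}$ is monotone on $[m,M]$, the extrema of $f''$ over $[m,M]$ sit at the endpoints, and in particular $\min_{[m,M]}f''=r(r-1)\min\{m^{r-2},M^{r-2}\}=\gamma$, which is positive for $r\notin[0,1]$, while $\max_{[m,M]}f''=r(r-1)M^{r-2}$ when $r\in(0,1)$; and, by \cite[Corollary 1.22]{FMPS2005}, $t^{r}$ is operator convex for $r\in[-1,0]\cup[1,2]$ whereas $-t^{r}$ is operator convex for $r\in[0,1]$. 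The values $r=0,1$ are trivial ($f\equiv1$ or $f(t)=t$, so $\gamma=0$, $K(m,M,r)=1$, and every asserted chain collapses to equalities), so I assume $r\notin\{0,1\}$ below.

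The one genuine computation is the identification of the constant $K(m,M,f)$ of \eqref{const-K} with the generalized Kantorovich constant $K(m,M,r)$. Writing $L(t)=\frac{M-t}{M-m}m^{r}+\frac{t-m}{M-m}M^{r}=ut+v$ with $u=\frac{M^{r}-m^{r}}{M-m}$ and $v=\frac{Mm^{r}-mM^{r}}{M-m}$, the ratio $g(t):=L(t)/f(t)=ut^{1-r}+vt^{-r}$ equals $1$ at $t=m$ and $t=M$ and, $f$ being non-affine, is not constant; since its only critical point on $(0,\infty)$ is $t_{0}=\frac{r}{1-r}\cdot\frac{v}{u}$, that point lies in $(m,M)$ and is the relevant extremum of $g$ on $[m,M]$. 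A short simplification then gives $g(t_{0})=\frac{v}{1-r}\bigl(\frac{(1-r)u}{rv}\bigr)^{r}=K(m,M,r)$. For strictly convex $f$ ($r\notin[0,1]$) one has $L\ge f$, so $g\ge1$ and $t_{0}$ is the maximum, whence $K(m,M,f)=K(m,M,r)>1$; for strictly concave $f$ ($r\in(0,1)$) one has $L\le f$, so $g\le1$ and $t_{0}$ is the minimum, whence $k(m,M,f)=K(m,M,r)$ in the notation of the Remark after Theorem~\ref{th2}.

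With these in hand the three cases follow. For (i), $r\in(-\infty,-1)\cup(2,\infty)$: $f$ is strictly convex on $[m,M]$, so Corollary~\ref{cor1} with $\alpha=\gamma$ and $K(m,M,f)=K(m,M,r)$ yields the five-term chain verbatim, its outermost two inequalities being the positivity of $(M+m)\Phi(A)-Mm-\Phi(A^{2})=\Phi((M-A)(A-m))\ge0$ and of $(M+m)\Phi(A)-Mm-\Phi(A)^{2}=(M-\Phi(A))(\Phi(A)-m)\ge0$. For (ii), $r\in[-1,0]\cup[1,2]$: $f$ is still strictly convex, so the lower half of Corollary~\ref{cor1} gives the first two inequalities of \eqref{cor2-2}, while the last inequality $\Phi(A)^{r}\le\Phi(A^{r})$ is C-D-J for the operator convex function $t^{r}$ (and is sharper here than the converse bound Corollary~\ref{cor1} would give). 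For (iii), $r\in[0,1]$: $f$ is strictly concave, so I apply the Remark following Theorem~\ref{th2} with $\beta=\max_{[m,M]}f''=r(r-1)M^{r-2}$ (so $\beta<0$ and $-\frac{\beta}{2}=\frac{r(1-r)}{2M^{2-r}}$) and $k(m,M,f)=K(m,M,r)$; this produces the reversed chain of \eqref{cor2-3} down to $\Phi(A)^{r}$, and the final inequality $\Phi(A)^{r}\ge\Phi(A^{r})$ is C-D-J for the operator concave $t^{r}$.

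I expect the main obstacle to be exactly the constant identification of the second paragraph: correctly locating $t_{0}$, arguing that it lies in $(m,M)$ and is the relevant extremum, and carrying the algebra through to the closed form $K(m,M,r)$. Everything else is bookkeeping of the signs of $\gamma$, $\beta$ and of the two quadratic-deficiency terms $(M+m)\Phi(A)-Mm-\Phi(A^{2})$ and $(M+m)\Phi(A)-Mm-\Phi(A)^{2}$, together with citing the operator convexity/concavity of $t^{r}$ on the borderline intervals.
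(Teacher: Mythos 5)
Your route is exactly the one the paper intends (the paper itself only says ``apply Corollary~\ref{cor1} to $f(t)=t^{r}$'' and leaves the details to the reader), and most of your filling-in is sound: the endpoint evaluation of $f''(t)=r(r-1)t^{r-2}$ giving $\alpha=\gamma$ (resp.\ $\beta=r(r-1)M^{r-2}$), the positivity of the two deficiency terms $\Phi\bigl((M-A)(A-m)\bigr)$ and $(M-\Phi(A))(\Phi(A)-m)$, the appeal to operator convexity/concavity of $t^{r}$ for the final links in (ii) and (iii), and in particular the identification $K(m,M,f)=K(m,M,r)$ via the unique interior critical point $t_{0}=\frac{r}{1-r}\cdot\frac{v}{u}$ of $L(t)/t^{r}$ are all correct; this last computation is the one substantive step the paper omits, and you carry it through properly. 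Parts (i) and (ii) are therefore fine.

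There is, however, a genuine problem in part (iii) that you gloss over. The Remark after Theorem~\ref{th2} yields $f(\Phi(A))\le \frac{1}{k}\bigl\{\Phi(f(A))+\frac{\beta}{2}\bigl[(M+m)\Phi(A)-Mm-\Phi(A^{2})\bigr]\bigr\}$, and with your (correct) $\beta=r(r-1)M^{r-2}$ the coefficient is $\frac{\beta}{2}=-\frac{r(1-r)}{2M^{2-r}}$, i.e.\ the correction term enters with a \emph{minus} sign, whereas \eqref{cor2-3} carries it with a \emph{plus} sign. The plus-sign version of the middle inequality then follows a fortiori (the term is nonnegative), but the first inequality of \eqref{cor2-3}, namely $\frac{1}{K(m,M,r)}\Phi(A^{r})\ge \frac{1}{K(m,M,r)}\bigl\{\Phi(A^{r})+\frac{r(1-r)}{2M^{2-r}}[\cdots]\bigr\}$, asserts that a quantity dominates itself plus something nonnegative, and so is false whenever $\Phi\bigl((M-A)(A-m)\bigr)\ne 0$. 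What your argument actually proves is the chain with $-\frac{r(1-r)}{2M^{2-r}}$ in place of $+\frac{r(1-r)}{2M^{2-r}}$, which is both valid and the sharper refinement; the printed statement appears to contain a sign error. Your claim that the Remark ``produces the reversed chain of \eqref{cor2-3} down to $\Phi(A)^{r}$'' is therefore not accurate as it stands: a complete proof must either correct the sign in the statement or explicitly record that the first inequality of \eqref{cor2-3} cannot hold as written.
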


\begin{remark}
The inequalities \eqref{cor2-2} and \eqref{cor2-3} strengthens some well-known inequalities \cite[Theorem 1]{BD2001}. For example our result for $r=-1$:
\begin{equation} \label{ineq01_corollary}
\Phi(A^{-1} )\leq \frac{(M+m)^2}{4Mm} \Phi(A)^{-1} -\frac{(M+m) \Phi(A)-Mm-\Phi(A^2)}{M^3},
\end{equation}
 improves the  Kantorovich inequality
\begin{equation} \label{ineq01_remark}
\Phi(A^{-1} ) \leq \frac{(M+m)^2}{4Mm} \Phi(A)^{-1},
\end{equation}
given in \cite{MO} (see also \cite[Proposition 2.7.8]{Bhatia2007}).
\end{remark}

\medskip

Let us give an explicit simple example.
\begin{example}
We take the function  $f(t)=\frac{1}{t}$ on $0<m\leq t \leq M$.
For a positive operator $X$ on a Hilbert space $\mathcal{H}$, we also take $\Phi \left( X \right) \equiv\frac{1}{\dim \mathcal{H}} Tr\left[ X \right]$, and
\[A = \left( {\begin{array}{*{20}{c}}
3&{ - 2}\\
{ - 2}&7
\end{array}} \right).\]
The eigenvalues of $A$ are $5 \pm 2\sqrt{2}$ so that we take $m=2$ and $M=8$.
Then we have
$$
\frac{(M+m)^2}{4Mm} \left(\frac{Tr[A]}{2}\right)^{-1} -\frac{Tr[A^{-1}]}{2} =\frac{5}{272},
$$
and
$$
\frac{(M+m)^2}{4Mm} \left(\frac{Tr[A]}{2}\right)^{-1} -\frac{(M+m) \left(\frac{Tr[A]}{2}\right)-Mm-\left(\frac{Tr[A^2]}{2}\right)}{M^3} -\frac{Tr[A^{-1}]}{2} =\frac{143}{8704}.
$$
Since $\frac{5}{272}-\frac{143}{8704}=\frac{1}{512}$, this example shows our inequality (\ref{ineq01_corollary}) is better than the inequality (\ref{ineq01_remark}).
\end{example}

\section*{Acknowledgement}
The authors thank anonymous referees for giving valuable comments and suggestions to improve our manuscript.
The author (S.F.) was partially supported by JSPS KAKENHI Grant Number
16K05257.

%-----------------------------------------------------------------------------
%-----------------------------------------------------------------------------

\end{document}